\documentclass[a4paper,11pt,reqno]{amsart}
\usepackage{amsmath}
\usepackage{amssymb}
\usepackage{xcolor}

\usepackage{mathrsfs}
\renewcommand{\mathcal}{\mathscr}

\theoremstyle{plain} 
\newtheorem{theorem}{Theorem}[section]

\newtheorem{lemma}[theorem]{Lemma}
\newtheorem{corollary}[theorem]{Corollary}
\newtheorem{remark}[theorem]{Remark}


\makeatletter
    
    \@addtoreset{equation}{section}
  \makeatother

\title[Minimal random walk model of elephant type]{Limit theorems for the `laziest' minimal random walk model of elephant type}
\thanks{M.T. is partially supported by JSPS  Grant-in-Aid for Scientific Research (B) No. 19H01793 and (C) No. 19K03514.
}
\author{Tatsuya Miyazaki}
\address{Graduate School of Engineering Science, Yokohama National University, Yokohama, Japan}
\email{miyazaki-tatsuya-ct@ynu.jp}
\author{Masato Takei}
\address{Department of Applied Mathematics, Faculty of Engineering, Yokohama National University, Yokohama, Japan}
\email{takei-masato-fx@ynu.ac.jp}

\begin{document}

\begin{abstract}
We consider a minimal model of one-dimensional discrete-time random walk with step-reinforcement, introduced by Harbola, Kumar, and Lindenberg (2014): The walker can move forward (never backward), or remain at rest. For each $n=1,2,\cdots$, a random time $U_n$ between $1$ and $n$ is chosen uniformly, and if the walker moved forward [resp. remained at rest] at time $U_n$, then at time $n+1$ it can move forward with probability $p$ [resp. $q$], or with probability $1-p$ [resp. $1-q$] it remains at its present position. For the case $q>0$, several limit theorems are obtained by Coletti, Gava, and de Lima (2019). In this paper we prove limit theorems for the case $q=0$, where the walker can exhibit all three forms of asymptotic behavior as $p$ is varied. As a byproduct, we obtain limit theorems for the cluster size of the root in percolation on uniform random recursive trees.
\end{abstract}

\maketitle

\section{Introduction}
\label{intro}

The elephant random walk, introduced by Sch\"{u}tz and Trimper \cite{SchutzTrimper04}, is defined as follows: \begin{itemize}
\item The first step $Y_1$ of the walker is $+1$ with probability $s$, and $-1$ with probability $1-s$.
\item For each $n=1,2,\cdots$, let $U_n$ be uniformly distributed on $\{1,\cdots,n\}$, and
\begin{align*}
Y_{n+1} &=  \begin{cases}
Y_{U_n} &\mbox{with probability $p$}, \\
-Y_{U_n} &\mbox{with probability $1-p$}. \\
\end{cases}
\end{align*} 
\end{itemize}
Each of choices in the above procedure is made independently. The sequence $\{Y_i\}$ generates a one-dimensional random walk $\{S_n\}$ by
\[ S_0:=0,\quad \mbox{and} \quad S_n:= \sum_{i=1}^n Y_i \quad \mbox{for $n=1,2,\cdots$.} \]
It admits a phase transition from diffusive to superdiffusive behavior at the critical value $p_c=3/4$. Several limit theorems are obtained in \cite{BaurBertoin16,Bercu18,Collettietal17a,Collettietal17b,KubotaTakei19JSP}. Variations of elephant random walks studied mainly from mathematical viewpoint are found in \cite{BercuLaulin19,Bertoin18,Bertoin19,Businger18,GutStadtmuller18,GutStadtmuller19}.

Kumar, Harbola, and Lindenberg \cite{KumarHarbolaLindenberg10PRE} proposed a random walk model of elephant type, which exhibits asymptotic subdiffusion, normal diffusion, and superdiffusion as a single parameter is swept. An even simpler model of this kind was introduced by Harbola, Kumar, and Lindenberg \cite{HarbolaKumarLindenberg14PRE}. Assume that $p \in (0,1)$, $q \in [0,1)$ and $s \in [0,1]$. Define a sequence $\{X_i\}$ of $\{0,1\}$-valued random variables as follows:
 \begin{itemize}
\item $P(X_1=1)=1-P(X_1=0)=s$.
\item For each $n=1,2,\cdots$, let $U_n$ be uniformly distributed on $\{1,\cdots,n\}$. 
\begin{itemize}
\item[$\triangleright$] If $X_{U_n}=1$, then $X_{n+1} =  \begin{cases}
1 &\mbox{with probability $p$}, \\
0 &\mbox{with probability $1-p$}. \\
\end{cases}$
\item[$\triangleright$] If $X_{U_n}=0$, then $X_{n+1} =  \begin{cases}
1 &\mbox{with probability $q$}, \\
0 &\mbox{with probability $1-q$}. \\
\end{cases}$
\end{itemize}
\end{itemize}
A random walk $\{H_n\}$ on $\mathbb{Z}_+ := \{0,1,2,\cdots\}$ is defined by
\[ H_0:=0,\quad \mbox{and} \quad  H_n:=\sum_{i=1}^n X_i \quad \mbox{for $n=1,2,\cdots$.} \]
Note that for $n=1,2,\cdots$, the conditional distribution of $X_{n+1}$ given the history up to time $n$ is
\begin{align*} 
&P(X_{n+1} = 1 \mid X_1,\cdots,X_n) = 1- P(X_{n+1} = 0 \mid X_1,\cdots,X_n)\\
&= p \cdot \dfrac{\#\{i=1,\cdots,n : X_i=1\}}{n} + q \cdot \dfrac{\#\{i=1,\cdots,n : X_i=0\}}{n}. 
\end{align*}
Since
\begin{align*}
\#\{i=1,\cdots,n : X_i=1\} = H_n \quad \mbox{and} \quad \#\{i=1,\cdots,n : X_i=0\} = n-H_n,
\end{align*}
the conditional expectation of $X_{n+1}$ is
\begin{align}
E[X_{n+1} \mid X_1,\cdots,X_n] = P(X_{n+1} = 1 \mid X_1,\cdots,X_n) &=\alpha \cdot \dfrac{H_n}{n} + q,
\label{eq:HarbolaKumarLindenberg14PREtrans}
\end{align}
where $n=1,2,\cdots$ and $\alpha:=p-q \in (-1,1)$. Solving
\begin{align*}
E[H_{n+1}] = \left(1+\dfrac{\alpha}{n} \right) E[H_n] + q,
\end{align*}
we have
\begin{align}
E[H_n] &= \dfrac{qn}{1-\alpha} + \left(s -\dfrac{q}{1-\alpha} \right)\cdot \dfrac{\Gamma(n+\alpha)}{\Gamma(1+\alpha) \Gamma(n)} \label{eq:CGL19(3)} \\
&\sim \begin{cases}
\dfrac{qn}{1-\alpha}&(q>0), \\[2mm]
 \dfrac{s}{\Gamma(1+p)} \cdot n^p &(q=0),
\end{cases} \notag
\end{align}
where $x_n \sim y_n$ means that $x_n/y_n$ converges to $1$ as $n \to \infty$. The walker is ballistic if $q>0$. On the other hand, in the case $q=0$, which we call the {\it `laziest' minimal random walk model of elephant type}, all three phases of asymptotic behavior are observed as $p \in (0,1)$ varies (see \cite{HarbolaKumarLindenberg14PRE}).

Coletti, Gava, and de Lima \cite{CollettiGavadeLima19JSM} proved several limit theorems for this model. The strong law of large numbers holds for any $q \in [0,1)$:
\[ \lim_{n \to \infty} \dfrac{H_n-E[H_n]}{n}= 0 \qquad \mbox{a.s..} \]
In particular, together with \eqref{eq:CGL19(3)},
\begin{align}
 \lim_{n \to \infty} \dfrac{H_n}{n}= \dfrac{q}{1-\alpha}\qquad \mbox{a.s..}
 \label{eq:Colettietal19sLLN}
\end{align}
For $q>0$, further limit theorems are proved in \cite{CollettiGavadeLima19JSM}.
Let $\rho := q/(1-\alpha) \in (0,1)$ and $\phi(t):=\sqrt{2t\log\log t}$.
\begin{itemize}
\item[(i)] If $0 \leq \alpha <1/2$, then
\begin{align*}
\dfrac{H_n-E[H_n]}{\sqrt{\tfrac{\rho(1-\rho)}{1-2\alpha}n}} \stackrel{d}{\to} N\left(0,1\right),\mbox{ and } 
\limsup_{n \to \infty} \pm \dfrac{H_n-E[H_n]}{\phi\left(\tfrac{\rho(1-\rho)}{1-2\alpha}n\right)}=1 
\mbox{ a.s..}
\end{align*}
\item[(ii)] If $\alpha=1/2$, then
\begin{align*}
\dfrac{H_n-E[H_n]}{\sqrt{\rho(1-\rho)n\log n}} \stackrel{d}{\to} N\left(0,1\right), \mbox{ and } 
\limsup_{n \to \infty} \pm \dfrac{H_n-E[H_n]}{\phi\left(\rho(1-\rho)n\log n\right)}=1
\mbox{ a.s..}
\end{align*}
\item[(iii)] If $1/2<\alpha<1$, then there exists a random variable $W$ with positive variance such that 
\begin{align*}
\displaystyle \lim_{n \to \infty} \dfrac{H_n-E[H_n]}{a_n} = W \quad \mbox{a.s. and in $L^2$,}
\end{align*}
where $a_n:=\dfrac{\Gamma(n+\alpha)}{\Gamma(1+\alpha) \Gamma(n)}$. 
Moreover, essentially the same calculation as in \cite{KubotaTakei19JSP} gives that
\begin{align*}
\dfrac{H_n-E[H_n]-W \cdot a_n}{\sqrt{\tfrac{\rho(1-\rho)}{2\alpha-1}n}} \stackrel{d}{\to} N\left(0,1\right),
\intertext{ and }
\limsup_{n \to \infty} \pm \dfrac{H_n-E[H_n]-W \cdot a_n}{\phi \left( \tfrac{\rho(1-\rho)}{2\alpha-1}n \right)}
=1 
\mbox{ a.s..}
\end{align*}
\end{itemize}

\begin{remark} If $s=\rho \in (0,1)$, then the minimal random walk model is equivalent to the ``correlated Bernoulli process" introduced by Drezner and Farnum \cite{DreznerFarnum93}. In the latter context, the central limit theorem was proved by Heyde \cite{Heyde04}.
\end{remark}

Although the $q=0$ case is the most interesting, only partial results are obtained in Coletti, Gava, and de Lima \cite{CollettiGavadeLima19JSM}. 
The aim of this article is to prove several limit theorems for this case.

\section{Results} \label{sec:Results}

In this section we consider the `laziest' minimal random walk model of elephant type ($q=0$). Note that if $X_1=0$, then $X_n=0$ for all $n$. Hereafter we assume that $s=1$ in addition. The dynamics can be summarized as follows: Let $p \in (0,1)$.
\begin{itemize}
\item The first step $X_1$ of the walker is $1$ with probability one.
\item For each $n=1,2,\cdots$, let $U_n$ be uniformly distributed on $\{1,\cdots,n\}$, and
\begin{align*}
X_{n+1} &=  \begin{cases}
X_{U_n} &\mbox{with probability $p$}, \\
0 &\mbox{with probability $1-p$}. \\
\end{cases}
\end{align*} 
\end{itemize}

The equation \eqref{eq:HarbolaKumarLindenberg14PREtrans} becomes
\begin{align}
 E[X_{n+1} \mid \mathcal{F}_n]=P(X_{n+1}=  1 \mid \mathcal{F}_n) 
 = p \cdot \dfrac{H_n}{n},  \label{eq:elephantRWCondDistp}
\end{align}
where $\mathcal{F}_n$ is the $\sigma$-algebra generated by $X_1,\cdots,X_n$.
Noting that
\[ E[H_{n+1} \mid \mathcal{F}_n]=\left(1+\dfrac{p}{n}\right) H_n, \]
we introduce 
\begin{align} \label{eq:ElephantRWa_nDef}
 a_0:=1, \quad \mbox{and} \quad 
 a_n := \prod_{k=1}^{n-1} \left(1+\dfrac{p}{k}\right)= \dfrac{\Gamma(n+p)}{\Gamma(n)\Gamma(1+p)}\quad \mbox{for $n=1,2,\cdots$,}
\end{align}
and set
\begin{align*}
\widehat{M_n}:=\dfrac{H_n}{a_n}\quad \mbox{for $n=0,1,2,\cdots$.}
\end{align*}
Then $\{ \widehat{M_n} \}$ satisfies a martingale property $E[\widehat{M_{n+1}} \mid \mathcal{F}_n]=\widehat{M_n}$. 
Since $ \widehat{M_n} $ is nonnegative, Doob's convergence theorem implies that
\begin{align}
\lim_{n \to \infty} M_n = \lim_{n \to \infty}\dfrac{H_n}{a_n} = \widehat{W} \quad \mbox{a.s..} 
\label{eq:LaziestMartLim1}
\end{align}
In Corollary \ref{cor:MiyazakiTakeiMoments} below, we show that $\widehat{W}$ is positive with probability one.

\subsection{Moments of the position}

For $k=1,2,\cdots$, let
\[ a_n^{(k)} := \dfrac{\Gamma(n+kp)}{\Gamma(n)\Gamma(1+kp)}. \]
Note that $a_n^{(1)}=a_n$. The moments of the position $H_n$ up to the fourth are calculated in section 4.6 of Coletti, Gava, and de Lima \cite{CollettiGavadeLima19JSM}:
\begin{align} 
\begin{aligned}
E[H_n] &= a_n^{(1)}, \\
E[(H_n)^2] &= 2a_n^{(2)}-a_n^{(1)}, \\
E[(H_n)^3] &= 6a_n^{(3)}-6a_n^{(2)}+a_n^{(1)}, \\
E[(H_n)^4] &= 24a_n^{(4)}-36a_n^{(3)}+14a_n^{(2)}-a_n^{(1)}.
\end{aligned} 
\label{eq:ColettideLimaGavaSec4.6}
\end{align}
We could not find a simple way to describe the coefficients.
Let $(x)_1:=x$ and $(x)_k:=x(x-1)\cdots(x-k+1)$ for $k=2,3,\cdots$.
The $k$-th factorial moment of a random variable $X$ is defined by $E[(X)_k]$.
Using \eqref{eq:ColettideLimaGavaSec4.6}, we can see that
\begin{align*}
E[(H_n)_2] &= E[(H_n)^2]-E[H_n] = 2( a_n^{(2)}-a_n^{(1)}), \\
E[(H_n)_3] &= E[(H_n)^3]-3E[(H_n)^2]+2E[H_n] = 6(a_n^{(3)}-2a_n^{(2)}+a_n^{(1)}),\\
E[(H_n)_4] &= E[(H_n)^4]-6E[(H_n)^3]+11E[(H_n)^2]-6E[H_n] \\
&= 24(a_n^{(4)}-3a_n^{(3)}+3a_n^{(2)}-a_n^{(1)}).
\end{align*}

The following theorem, which will be proved in section \ref{sec:Moments}, gives the general solution for $E[(H_n)_k]$.

\begin{theorem} \label{thm:MiyazakiTakeiMoments} Assume that $p \in (0,1)$, $q=0$, and $s=1$. For any $k=1,2,\cdots$ and $n=1,2,\cdots$, 
\begin{align}
E[(H_n)_k] = k! \cdot \sum_{i=1}^k (-1)^{k-i} \binom{k-1}{i-1} a_n^{(i)}. \label{eq:MiyazakiTakeiMoments}
\end{align}
\end{theorem}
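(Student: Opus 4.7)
I would prove this by induction on $n$, simultaneously for all $k\geq 1$, based on a one-step recursion for the factorial moments. The strategy has three ingredients: derive the recursion, verify the base case $n=1$, and match coefficients in the inductive step.

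First, I would obtain the recursion. Since $H_{n+1}=H_n+X_{n+1}$ with $X_{n+1}\in\{0,1\}$, the elementary identity $(x+1)_k-(x)_k=k\,(x)_{k-1}$ combined with \eqref{eq:elephantRWCondDistp} and $(H_n)_{k-1}\cdot H_n=(H_n)_k+(k-1)(H_n)_{k-1}$ yields
\[
E[(H_{n+1})_k\mid\mathcal{F}_n]=\left(1+\frac{kp}{n}\right)(H_n)_k+\frac{k(k-1)p}{n}(H_n)_{k-1}.
\]
Taking expectations gives a two-term recursion in $n$ that couples $m_n^{(k)}:=E[(H_n)_k]$ with $m_n^{(k-1)}$.

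Next, since $s=1$ forces $H_1=1$ a.s., $(H_1)_k$ equals $1$ for $k=1$ and $0$ for $k\geq 2$. The right-hand side of \eqref{eq:MiyazakiTakeiMoments} at $n=1$ takes the same values, because $a_1^{(i)}=1$ for every $i$ and $\sum_{j=0}^{k-1}(-1)^{k-1-j}\binom{k-1}{j}=0$ for $k\geq 2$.

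Finally, assuming the formula holds at $n$ for every $k$, I would exploit $a_{n+1}^{(i)}=(1+ip/n)\,a_n^{(i)}$ to split
\[
\left(1+\frac{kp}{n}\right)a_n^{(i)}=a_{n+1}^{(i)}+\frac{(k-i)p}{n}\,a_n^{(i)}.
\]
Substituting the claimed formula into the recursion, the $a_{n+1}^{(i)}$ pieces reproduce \eqref{eq:MiyazakiTakeiMoments} at $n+1$, and the residual terms of order $a_n^{(i)}/n$ must cancel against $\frac{k(k-1)p}{n}\,m_n^{(k-1)}$. This cancellation reduces to the binomial identity
\[
(k-i)\binom{k-1}{i-1}=(k-1)\binom{k-2}{i-1}\qquad(1\leq i\leq k-1),
\]
which is immediate. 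This coefficient bookkeeping is the only mildly technical step and constitutes the heart of the proof; everything else is routine. An alternative route via the exponential generating function $G_n(z)=E[(1+z)^{H_n}]$, whose evolution reads $G_{n+1}(z)-G_n(z)=\frac{zp(1+z)}{n}G_n'(z)$ with $G_1(z)=1+z$, is possible but does not seem simpler than the direct induction.
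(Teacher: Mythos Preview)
Your proof is correct and follows essentially the same route as the paper: both derive the two-term recursion
\[
E[(H_{n+1})_k]=\Bigl(1+\tfrac{kp}{n}\Bigr)E[(H_n)_k]+\tfrac{k(k-1)p}{n}E[(H_n)_{k-1}],
\]
verify the base case $n=1$, and close the induction via the identity $(k-i)\binom{k-1}{i-1}=(k-1)\binom{k-2}{i-1}$. The only cosmetic difference is that the paper obtains the recursion by differentiating the probability generating function $f_n(x)=E[x^{H_n}]$ and applying Leibniz's rule, whereas you reach it more directly through the falling-factorial identities $(x+1)_k-(x)_k=k(x)_{k-1}$ and $x\,(x)_{k-1}=(x)_k+(k-1)(x)_{k-1}$; your alternative $G_n(z)=E[(1+z)^{H_n}]$ is exactly the paper's $f_n$ after the substitution $x=1+z$.
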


Since $a_n^{(k)} \sim \dfrac{n^{kp}}{\Gamma(1+kp)}$ as $n \to \infty$, 
we have
\[ \lim_{n \to \infty} \dfrac{E[(H_n)^k]}{a_n^{(k)}} = \lim_{n \to \infty} \dfrac{E[(H_n)_k]}{a_n^{(k)}} = k!
\quad \mbox{for any $k=1,2,\cdots$}. \]
The following corollary is a much more precise result than \eqref{eq:Colettietal19sLLN} for $q=0$.

\begin{corollary} \label{cor:MiyazakiTakeiMoments}
For any $k=1,2,\cdots$, 
\[ \lim_{n \to \infty} E \left[\left(\dfrac{H_n}{n^p}\right)^k\right] = \dfrac{k!}{\Gamma(1+kp)}.  \]
Thus the martingale $\{\widehat{M}_n\}$ is $L^k$-bounded for any $k$, and
the almost sure limit 
\begin{align}
\mathcal{W}:= \lim_{n \to \infty}\dfrac{H_n}{n^p} = \dfrac{\widehat{W}}{\Gamma(1+p)} 
\label{eq:LaziestMartLim2}
\end{align}
has a Mittag--Leffler distribution with parameter $p$ (see Appendix \ref{app:MLdistr}). In particular, $P(\mathcal{W}>0)=1$.
\end{corollary}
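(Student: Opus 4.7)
The plan is to read off the corollary directly from Theorem \ref{thm:MiyazakiTakeiMoments} and a moment-determinacy argument for the Mittag--Leffler distribution. First I would use the asymptotic $a_n^{(i)} \sim n^{ip}/\Gamma(1+ip)$ in \eqref{eq:MiyazakiTakeiMoments}: among the $k$ summands on the right-hand side, only $i=k$ grows like $n^{kp}$, while the others are $O(n^{(k-1)p}) = o(n^{kp})$. Dividing by $n^{kp}$ therefore yields
\begin{equation*}
\lim_{n \to \infty} \frac{E[(H_n)_k]}{n^{kp}} = \frac{k!}{\Gamma(1+kp)}.
\end{equation*}

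Next I would pass from factorial moments to ordinary moments. Writing $(H_n)_k = H_n^k + \sum_{j=1}^{k-1} c_{k,j} H_n^{j}$ with Stirling numbers $c_{k,j}$, and noting that each lower-order term satisfies $E[H_n^j] = O(n^{jp})$ (by induction on $j$, again using Theorem \ref{thm:MiyazakiTakeiMoments}), the corrections contribute $o(n^{kp})$, so
\begin{equation*}
\lim_{n \to \infty} E\!\left[\left(\frac{H_n}{n^p}\right)^{k}\right] = \frac{k!}{\Gamma(1+kp)}.
\end{equation*}
Since $a_n \sim n^p/\Gamma(1+p)$ by \eqref{eq:ElephantRWa_nDef}, this gives $\sup_n E[\widehat{M}_n^k] < \infty$ for every $k$, i.e.\ $L^k$-boundedness.

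Combining $L^k$-boundedness with the almost sure limit \eqref{eq:LaziestMartLim1} and Doob's $L^k$-convergence theorem yields $\widehat{M}_n \to \widehat{W}$ in $L^k$ for each $k \geq 1$. Hence moments pass to the limit, and \eqref{eq:LaziestMartLim2} together with the previous display gives
\begin{equation*}
E[\mathcal{W}^k] = \frac{k!}{\Gamma(1+kp)} \qquad (k=1,2,\ldots).
\end{equation*}
These are exactly the moments of the Mittag--Leffler distribution with parameter $p$, as recorded in Appendix \ref{app:MLdistr}.

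Finally I would invoke the fact (to be stated in the appendix) that the Mittag--Leffler distribution is uniquely determined by its moments; this is the main technical point, since for $p$ close to $0$ the moments grow too fast for a textbook application of Carleman's criterion and one instead uses the analyticity of the Mittag--Leffler Laplace transform. Once moment-determinacy is granted, $\mathcal{W}$ has the claimed Mittag--Leffler law; since this distribution is absolutely continuous on $(0,\infty)$ with no atom at $0$, we conclude $P(\mathcal{W} > 0) = 1$, and consequently $\widehat{W} = \Gamma(1+p)\, \mathcal{W} > 0$ almost surely as well.
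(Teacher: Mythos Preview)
Your proposal is correct and follows exactly the route the paper takes: the paper simply notes (just before the corollary) that $a_n^{(k)}\sim n^{kp}/\Gamma(1+kp)$ gives $\lim_n E[(H_n)^k]/a_n^{(k)}=\lim_n E[(H_n)_k]/a_n^{(k)}=k!$, and then defers moment-determinacy of the Mittag--Leffler law to Appendix~\ref{app:MLdistr}. Your write-up is more explicit about the Stirling-number passage from factorial to ordinary moments and about transferring moments to the limit via $L^k$-convergence, but these are exactly the details the paper leaves implicit.
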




\subsection{Limit theorems for the laziest case} 

By Corollary \ref{cor:MiyazakiTakeiMoments}, the limit $\widehat{W}$ in \eqref{eq:LaziestMartLim1} satisfies $P(\widehat{W}>0)=1$. 
Based on this fact, we obtain central limit theorems in the following form.

\begin{theorem} \label{thm:MiyazakiTakeiCLT} Assume that $q=0$ and $s=1$. For $0<p<1$, 
\begin{align*}
\dfrac{H_n-\widehat{W} \cdot a_n}{\sqrt{\widehat{W} \cdot a_n}} \stackrel{d}{\to} N\left(0,1\right), 
\intertext{and}
\dfrac{H_n-\widehat{W} \cdot a_n}{\sqrt{n^p}} \stackrel{d}{\to} \sqrt{\mathcal{W}'} \cdot Z,
\end{align*}
where $Z$ is distributed as $N(0,1)$, and $ \mathcal{W}'$ is independent of $Z$ and has the same distribution as $ \mathcal{W}$.
\end{theorem}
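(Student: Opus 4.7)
The plan is to derive both convergences from a single stable martingale CLT. The two assertions differ by an almost-sure rescaling: by \eqref{eq:LaziestMartLim2} and $a_n\sim n^p/\Gamma(1+p)$, one has $\sqrt{\widehat{W}\,a_n/n^p}\to \sqrt{\mathcal{W}}$ a.s. If the first convergence can be upgraded to stable convergence (with the $N(0,1)$ limit asymptotically independent of $\mathcal{F}_\infty$), then Slutsky's theorem yields $(H_n-\widehat{W}\,a_n)/\sqrt{n^p}\stackrel{d}{\to}\sqrt{\mathcal{W}}\,Z$, which has the same distribution as $\sqrt{\mathcal{W}'}\,Z$ for an independent copy $\mathcal{W}'$ of $\mathcal{W}$.

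Following the martingale method of \cite{KubotaTakei19JSP}, I write the fluctuation as a tail sum of martingale differences,
\begin{align*}
H_n - \widehat{W}\,a_n \;=\; -a_n\sum_{k=n}^{\infty} D_{k+1}, \qquad D_{k+1} \;:=\; \widehat{M}_{k+1}-\widehat{M}_k \;=\; \frac{X_{k+1}-pH_k/k}{a_{k+1}},
\end{align*}
with candidate normalizer $\sqrt{V_n}$ given by the conditional quadratic variation
\begin{align*}
V_n \;:=\; \sum_{k=n}^{\infty} E[D_{k+1}^2\mid \mathcal{F}_k] \;=\; \sum_{k=n}^{\infty} \frac{(pH_k/k)(1-pH_k/k)}{a_{k+1}^2}.
\end{align*}

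The decisive step is to establish $a_n V_n \to \widehat{W}$ almost surely. Since $H_k/k\to 0$ a.s.\ (because $H_k = O(k^p)$ with $p<1$), the factor $1-pH_k/k$ is asymptotically $1$, reducing $V_n$ to $p\sum_{k\geq n} H_k/(k\,a_{k+1}^2)$. Writing $H_k = \widehat{W}\,a_k + (\widehat{M}_k-\widehat{W})\,a_k$, the main term becomes $\widehat{W}\cdot p\sum_{k\geq n} a_k/(k\,a_{k+1}^2) \sim \widehat{W}/a_n$ by the elementary estimates $a_k/a_{k+1}^2 \sim \Gamma(1+p)/k^{p+1}$ and $\sum_{k\geq n} 1/k^{1+p}\sim 1/(pn^p)$, while the remainder is $o(1/a_n)$ because $\widehat{M}_k-\widehat{W}\to 0$ a.s.\ and $a_n \sum_{k\geq n} a_k/(k a_{k+1}^2) = O(1)$. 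The Lindeberg condition is trivial: $|D_{k+1}|\leq 1/a_{k+1}$ is deterministic, and $\max_{k\geq n}(1/a_{k+1})/\sqrt{V_n}\to 0$ a.s.

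With these ingredients in hand, a standard stable martingale CLT (e.g.\ Hall--Heyde, \emph{Martingale Limit Theory and Its Application}, Theorem 3.2, or the Aldous--Eagleson principle) concludes that $(\widehat{W}-\widehat{M}_n)/\sqrt{V_n}\stackrel{d}{\to} N(0,1)$ stably with respect to $\mathcal{F}_\infty$. Substituting $V_n \sim \widehat{W}/a_n$ and multiplying by $-a_n$ produces the first display of the theorem, and the second follows by the Slutsky argument of the opening paragraph. The main technical hurdle is the almost-sure identification $a_n V_n \to \widehat{W}$, which requires passing a pointwise limit through an infinite series — the uniform $L^r$-boundedness of $\widehat{M}_n$ from Corollary \ref{cor:MiyazakiTakeiMoments} is the main handle. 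Once this is settled, the Lindeberg verification and the invocation of the stable CLT follow the templates of \cite{KubotaTakei19JSP}.
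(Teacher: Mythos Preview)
Your approach is essentially the same as the paper's: express $H_n-\widehat W\,a_n$ as $-a_n$ times a tail sum of martingale differences, identify the tail conditional quadratic variation as $\sim \widehat W/a_n$ a.s., verify a Lindeberg-type negligibility condition via the deterministic bound $|d_k|\le 1/a_k$, and invoke a CLT for tail sums of a convergent martingale. The paper carries this out by citing Heyde (1977), Theorem~1(b) (reproduced as Theorem~\ref{thm:Heyde77Theorem1b} in the appendix), which is formulated precisely for this tail-sum situation and delivers both displays of Theorem~\ref{thm:MiyazakiTakeiCLT} simultaneously---the second one directly, without a separate Slutsky step. Two small points where the paper is more careful than your sketch: (i) Heyde's hypothesis~a) asks for $W_n^2/s_n^2:=\sum_{k\ge n}d_k^2/s_n^2\to\widehat W$, i.e.\ convergence of the \emph{realized} tail quadratic variation, not just your conditional version $V_n$; the paper bridges the gap $\sum_{k\ge n}\{d_k^2-E[d_k^2\mid\mathcal F_{k-1}]\}/s_n^2\to 0$ via the tail Kronecker lemma and the $L^2$-martingale convergence theorem (Theorem~\ref{thm:HallHeyde80Thm215}); (ii) Hall--Heyde Theorem~3.2 is stated for forward triangular arrays with a nesting condition, so applying it to tail sums would require an explicit reformulation---Heyde's 1977 theorem is the cleaner reference here.
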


This situation is quite different from $q>0$: The central limit theorem holds for whole regions of parameter space, with random centering and random norming.

We also prove the law of the iterated logarithm.

\begin{theorem} \label{thm:MiyazakiTakeiLIL} Assume that $q=0$ and $s=1$. For $0<p<1$, 
\begin{align*}
\limsup_{n \to \infty} \pm \dfrac{H_n-\widehat{W} \cdot a_n}{\phi \left( \widehat{W} \cdot a_n \right)}
=1 
\mbox{ a.s.,}
\end{align*}
where $\phi(t):=\sqrt{2t\log \log t}$. 
\end{theorem}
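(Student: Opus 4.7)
The idea is to realize $H_n - \widehat{W}\,a_n$ as the tail of a convergent square-integrable martingale and invoke a tail law of the iterated logarithm with random norming. Set
\[
D_{k+1} := \widehat{M}_{k+1} - \widehat{M}_k = \frac{1}{a_{k+1}}\left(X_{k+1} - \frac{pH_k}{k}\right),
\]
which is a martingale difference by \eqref{eq:elephantRWCondDistp}. Since $\widehat{M}_n \to \widehat{W}$ a.s.\ by \eqref{eq:LaziestMartLim1},
\[
H_n - \widehat{W}\,a_n \;=\; -a_n \sum_{k\geq n} D_{k+1},
\]
so Theorem \ref{thm:MiyazakiTakeiLIL} is equivalent to a LIL for the tail sum $T_n := \sum_{k\geq n} D_{k+1}$ with norming $\phi(\widehat{W} a_n)/a_n$.

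The next step is to identify the conditional quadratic variation. Since $X_{k+1}\mid \mathcal{F}_k$ is Bernoulli with parameter $pH_k/k$,
\[
v_k := E[D_{k+1}^2 \mid \mathcal{F}_k] \;=\; \frac{1}{a_{k+1}^2}\cdot \frac{pH_k}{k}\left(1 - \frac{pH_k}{k}\right).
\]
Combining Corollary \ref{cor:MiyazakiTakeiMoments}, which gives $H_k/k^p \to \mathcal{W} = \widehat{W}/\Gamma(1+p)$ a.s.\ with $P(\mathcal{W}>0)=1$, together with $a_k \sim k^p/\Gamma(1+p)$, one gets $v_k \sim p\,\widehat{W}\,\Gamma(1+p)\,k^{-1-p}$ a.s. Hence
\[
V_n := \sum_{k\geq n} v_k \;\sim\; \frac{\widehat{W}\,\Gamma(1+p)}{n^p} \quad \text{a.s.,}
\]
and a short calculation yields $a_n\sqrt{2V_n \log\log(V_n^{-1})} \sim \phi(\widehat{W}\, a_n)$ a.s.

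Finally, I would apply a tail (reverse-time) LIL for square-integrable martingales, for instance of Heyde or Stout type. The required inputs are: (i) $\sum_k v_k < \infty$ a.s., which follows from step two; (ii) a truncation/Lindeberg condition, which is automatic since $|D_{k+1}| \leq 1/a_{k+1} = O(k^{-p})$ together with the higher-moment bounds on $H_k$ from Corollary \ref{cor:MiyazakiTakeiMoments}; and (iii) the a.s.\ asymptotics of $V_n$ derived above. The conclusion
\[
\limsup_{n\to\infty} \pm \frac{T_n}{\sqrt{2 V_n \log\log(V_n^{-1})}} \;=\; 1 \quad \text{a.s.}
\]
then gives Theorem \ref{thm:MiyazakiTakeiLIL} after multiplying by $-a_n$. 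The main obstacle is the random norming: $V_n$ has an a.s.\ limit proportional to the non-degenerate random variable $\widehat{W}$, so one cannot directly invoke the classical (deterministic-variance) martingale LIL and must either use a stable/Heyde--Scott version or condition on the tail $\sigma$-field generated by $\widehat{W}$ (as in the analogous computation in \cite{KubotaTakei19JSP}) to reduce to a conditional, essentially deterministic-variance, situation.
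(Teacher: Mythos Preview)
Your proposal is correct and follows essentially the same route as the paper: both realize $H_n-\widehat{W}a_n$ as $-a_n$ times the tail of the martingale $\widehat{M}_n$, compute the asymptotics of the tail conditional quadratic variation $V_n\sim \widehat{W}/a_n$, and then invoke Heyde's tail LIL (Theorem~\ref{thm:Heyde77Theorem1b}(iii) in the appendix). The paper carries this out by verifying conditions a$'$), c), d) of Heyde's theorem explicitly---in particular showing $W_n^2/s_n^2\to\widehat{W}$ via a Kronecker-type argument---so you should be aware that the random norming in Heyde's statement is $W_{n+1}^2=\sum_{k\ge n+1}(d_k)^2$ rather than $V_n$, and the passage from one to the other (or the conditioning alternative you mention) is precisely the step the paper spells out.
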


Theorems \ref{thm:MiyazakiTakeiCLT} and \ref{thm:MiyazakiTakeiLIL} will be proved in section \ref{sec:ProofLimitThm}.

\subsection{Applications to percolation on random recursive trees} \label{sec:ApplPercRRT}

K\"{u}rsten \cite{Kursten16} found important connections between (several variations of) elephant random walks and percolation on random recursive trees.

Consider the following procedure for obtaining a sequence $\{T_i\}$ of recursive trees: 
The first graph $T_1$ consists of a single vertex labeled 1. For each $i=2,3,\cdots$, the graph $T_i$ is evolved from $T_{i-1}$ by joining a new vertex labeled $i$ to a uniformly chosen vertex labeled $u_{i-1}$ from $T_{i-1}$.
We perform Bernoulli bond percolation on $T_n$: Each edge of $T_n$ is independently removed with probability $1-p$, and otherwise retained. Then we can see that the size $\# \mathcal{C}_{1,n}$ of the cluster containing the vertex labeled 1 has the same distribution as the position $H_n$ of the laziest minimal random walk model of elephant type (this relation is implicitly mentioned before eq. (43) in \cite{Kursten16}).

Thus all of our results described above have counterparts for $\# \mathcal{C}_{1,n}$. To our best knowledge, Theorems \ref{thm:MiyazakiTakeiMoments}, \ref{thm:MiyazakiTakeiCLT} and \ref{thm:MiyazakiTakeiLIL} are new also in this context. We remark that the $\# \mathcal{C}_{1,n}$-version of Corollary \ref{cor:MiyazakiTakeiMoments} is Lemma 3 in Businger \cite{Businger18}, where it plays a crucial role in analyzing the shark random swim, and is proved using a connection with the Yule process. 
Our proof of Corollary \ref{cor:MiyazakiTakeiMoments} based on Theorem \ref{thm:MiyazakiTakeiMoments} is a short alternative.

In Appendix \ref{sec:minRWandPerc}, we give a precise description of the relation between the minimal random walk model with $0\leq q < p <1$ and percolation on random recursive trees. As an easy and useful application of this, we derive new expressions of the expectation and the variance of $H_n$, in terms of size of percolation clusters.

\section{Factorial moments of the position} \label{sec:Moments}

To prove Theorem \ref{thm:MiyazakiTakeiMoments},
we use the probability generating functions: Let
\[ f_n(x) := E[x^{H_n}] \qquad \mbox{for $n=1,2,\cdots$.} \]
Recall that $f_n^{(k)}(1)=E[(H_n)_k]$, where $f_n^{(k)}(1)$ denotes the $k$-th derivative of $f_n(x)$ at $x=1$. By \eqref{eq:elephantRWCondDistp}, we can see
\begin{align*}
 E[x^{H_{n+1}} \mid \mathcal{F}_n] &= x^{H_n} \cdot \left\{ x^1 \cdot p \cdot \dfrac{H_n}{n} +  x^0 \cdot \left(1-p \cdot \dfrac{H_n}{n}\right)\right\} \\
 &= x^{H_n} \cdot \left\{ \dfrac{p(x-1)}{n} \cdot H_n +1\right \},
\end{align*}
and
\begin{align*}
f_{n+1}(x) &= E\left[ x^{H_n} \cdot \left\{ \dfrac{p(x-1)}{n} \cdot H_n +1 \right\} \right] \\
&= \dfrac{p(x-1)}{n} \cdot E[H_n x^{H_n} ] + E[x^{H_n}] 
= \dfrac{px(x-1)}{n} \cdot f'_n(x) + f_n(x).
\end{align*}
The Leibniz rule yields 
\begin{align*}
f_{n+1}^{(k)}(x) &= \dfrac{px(x-1)}{n} \cdot f_n^{(k+1)}(x) + \binom{k}{1} \cdot \dfrac{2px-p}{n} \cdot f_n^{(k)}(x)\\
&\quad + \binom{k}{2} \cdot \dfrac{2p}{n} \cdot f_n^{(k-1)}(x)+f_n^{(k)}(x).
\end{align*}
Thus we have
\begin{align*}
E[(H_{n+1})_k] = f_{n+1}^{(k)}(1) 
&= \dfrac{kp}{n} \cdot f_n^{(k)}(1)+ \dfrac{k(k-1)p}{n} \cdot f_n^{(k-1)}(1)+f_n^{(k)}(1) \\
&= \dfrac{n+kp}{n} \cdot E[(H_n)_k] + \dfrac{k(k-1)p}{n} \cdot E[(H_n)_{k-1}].
\end{align*}

We prove \eqref{eq:MiyazakiTakeiMoments} by induction. For each $n=1,2,\cdots$, $E[H_n] = a_n^{(1)}$ in \eqref{eq:ColettideLimaGavaSec4.6} clearly satisfies \eqref{eq:MiyazakiTakeiMoments}. On the other hand,
\[ E[H_1]=1,\quad \mbox{and} \quad E[(H_1)_k] = 0 \quad \mbox{for $k=2,3,\cdots$} \]
satisfy \eqref{eq:MiyazakiTakeiMoments}, as $a_1^{(1)}=1$.
Assume that $k>2$, and that $E[(H_n)_{k-1}]$ and $E[(H_n)_k]$ satisfy \eqref{eq:MiyazakiTakeiMoments}. We can see that
\begin{align*}
E[(H_{n+1})_k]&= \dfrac{n+kp}{n} \cdot k! \cdot \sum_{i=1}^k (-1)^{k-i} \binom{k-1}{i-1} a_n^{(i)} \\
&\quad + \dfrac{k(k-1)p}{n} \cdot (k-1)! \cdot \sum_{i=1}^{k-1} (-1)^{k-1-i} \binom{k-2}{i-1} a_n^{(i)} \\
&= \dfrac{n+kp}{n} \cdot k! \cdot  a_n^{(k)} \\
&\quad + k! \cdot \sum_{i=1}^{k-1} (-1)^{k-i} \binom{k-1}{i-1} a_n^{(i)} \left(\dfrac{n+kp}{n}-\dfrac{(k-1)p}{n} \cdot \dfrac{k-i}{k-1} \right) \\
&= k! \cdot  a_{n+1}^{(k)} + k! \cdot \sum_{i=1}^{k-1} (-1)^{k-i} \binom{k-1}{i-1} a_n^{(i)} \cdot \dfrac{n+ip}{n} \\
&= k! \cdot \sum_{i=1}^k (-1)^{k-i} \binom{k-1}{i-1} a_{n+1}^{(i)}.
\end{align*}
This completes the proof of Theorem \ref{thm:MiyazakiTakeiMoments}.

\section{Limit theorems} \label{sec:ProofLimitThm}

The structure of our proof of Theorems \ref{thm:MiyazakiTakeiCLT} and \ref{thm:MiyazakiTakeiLIL} is similar to that of \cite{KubotaTakei19JSP}.  
For $n=0,1,2,\cdots$, we set
\[ M_n:=\dfrac{H_n-E[H_n]}{a_n}(=\widehat{M_n}-1), \]
where $a_n$ is defined in \eqref{eq:ElephantRWa_nDef}. Clearly $\{M_n \}$ is a martingale with mean zero.

As in the proof of Lemma 1 of \cite{KubotaTakei19JSP},
we can obtain
\begin{align} 
M_{n+1}-M_n 
= \dfrac{X_{n+1}-E[X_{n+1} \mid \mathcal{F}_n]}{a_{n+1}} \label{eq:ElephantRWmartdiff1}
\end{align}
for each $n=1,2,\cdots$.
Noting that $X_n^2=X_n$, we have
\begin{align}
E[(M_{n+1}-M_n)^2 \mid \mathcal{F}_n] 
&=\dfrac{E[(X_{n+1}-E[X_{n+1} \mid \mathcal{F}_n])^2 \mid \mathcal{F}_n]}{(a_{n+1})^2} \notag \\
&=\dfrac{E[X_{n+1}^2 \mid \mathcal{F}_n] -(E[X_{n+1} \mid \mathcal{F}_n])^2}{(a_{n+1})^2} \notag \\
&=\dfrac{E[X_{n+1} \mid \mathcal{F}_n] \cdot (1 -E[X_{n+1} \mid \mathcal{F}_n])}{(a_{n+1})^2}.\label{eq:ElephantRWmartdiff2}
\end{align}
Note that \eqref{eq:ElephantRWmartdiff1} and \eqref{eq:ElephantRWmartdiff2} hold also for $n=0$, where $\mathcal{F}_0$ is the trivial $\sigma$-algebra.


For $k=1,2,\cdots$, let
\[ d_k:=M_k-M_{k-1}=\dfrac{X_k-E[X_k \mid \mathcal{F}_{k-1}]}{a_k}. \]
Note that $|d_k| \leq \dfrac{1}{a_k} \leq 1$.
Using \eqref{eq:ElephantRWmartdiff2}, \eqref{eq:Colettietal19sLLN}, and \eqref{eq:LaziestMartLim1},
\begin{align*}
E\left[ (d_k)^2 \mid \mathcal{F}_{k-1} \right] &= \dfrac{E[X_k \mid \mathcal{F}_{k-1}] \cdot (1 -E[X_k \mid \mathcal{F}_{k-1}])}{(a_k)^2} \\
&= p \cdot \dfrac{H_{k-1}}{k-1} \cdot \left(1-p \cdot \dfrac{H_{k-1}}{k-1}\right) \cdot \dfrac{1}{(a_k)^2} \\
&= p \cdot \dfrac{H_{k-1}}{a_{k-1}} \cdot \dfrac{a_{k-1}}{k-1} \cdot \left(1-p \cdot \dfrac{H_{k-1}}{k-1}\right) \cdot \dfrac{1}{(a_k)^2} \\
&\sim \dfrac{p \cdot \widehat{W}}{ka_k} \qquad \mbox{as $k \to \infty$ a.s..}
\end{align*}
As $|d_k| \leq 1$, the bounded convergence theorem yields that
\begin{align*}
E[ (d_k)^2 ] \sim \dfrac{p \cdot E[\widehat{W}]}{ka_k} =\dfrac{p}{ka_k} \qquad \mbox{as $k \to \infty$.}
\end{align*}
Thus we have
\begin{align*}
 V_n^2&:= \sum_{k=n}^{\infty} E\left[ (d_k)^2 \mid \mathcal{F}_{k-1} \right] \\
 &\sim p \cdot \widehat{W} \cdot \Gamma(1+p) \sum_{k=n}^{\infty} \dfrac{1}{k^{1+p}} \\
 &\sim p \cdot \widehat{W} \cdot \Gamma(1+p) \cdot \dfrac{1}{pn^p}
 =\frac{\widehat{W} \cdot \Gamma(1+p)}{n^p} \sim \dfrac{\widehat{W}}{a_n}\qquad \mbox{as $n  \to \infty$ a.s.,} 
\end{align*}
and
\begin{align*}
 s_n^2&:=\sum_{k=n}^{\infty} E[(d_k)^2] \sim \dfrac{1}{a_n}\qquad \mbox{as $n  \to \infty$,} 
\end{align*}
which imply that 
\begin{align}
\lim_{n \to \infty} \dfrac{V_n^2}{s_n^2}=\widehat{W} \quad \mbox{a.s..} \label{eq:KubotaTakeiSupercriticalCLTLILV_nLLN}
\end{align}

\begin{proof}[Proof of Theorems \ref{thm:MiyazakiTakeiCLT} and \ref{thm:MiyazakiTakeiLIL}] 
We check the conditions of Theorem \ref{thm:Heyde77Theorem1b} (ii) and (iii) in Appendix \ref{sec:MartLimitThm} are satisfied.

To prove conditions a) and a') 
with $\eta^2=\widehat{W}$ are satisfied, we will show
\[
\lim_{n \to \infty} \dfrac{1}{s_n^2}\sum_{k=n}^{\infty} \{ (d_k)^2 - E[(d_k)^2 \mid \mathcal{F}_{k-1}]\} = 0 \quad \mbox{a.s..}
\]
By the tail version of Kronecker's lemma (see Lemma 1 (ii) in Heyde \cite{Heyde77}), it is sufficient to show
\begin{align}
\sum_{k=1}^{\infty} \dfrac{1}{s_k^2} \{ (d_k)^2 - E[(d_k)^2 \mid \mathcal{F}_{k-1}]\}<+\infty \quad \mbox{a.s..} \label{eq:WnVnKroneckerTail}
\end{align}
Let $\widetilde{d}_k$ denote the summand. Note that
\[
\sum_{k=1}^{\infty} E[ (\widetilde{d}_k)^2 \mid \mathcal{F}_{k-1}] \leq \sum_{k=1}^{\infty} \dfrac{1}{s_k^4} E[(d_k)^4 \mid \mathcal{F}_{k-1}].
\]
Since
\begin{align*}
E[(d_k)^4 \mid \mathcal{F}_{k-1}] 
&\leq \dfrac{1}{(a_k)^2} \cdot E[(d_k)^2 \mid \mathcal{F}_{k-1}] \\
&\sim s_k^4 \cdot \dfrac{p \cdot \widehat{W}}{ka_k} \sim s_k^4 \cdot \dfrac{p \cdot \widehat{W} \cdot \Gamma(1+p)}{k^{1+p}}\quad \mbox{as $k  \to \infty$,} 
\end{align*}
the series in the right hand side converges a.s..
Theorem \ref{thm:HallHeyde80Thm215} implies \eqref{eq:WnVnKroneckerTail}.

For $\varepsilon>0$, noting that
\[
E[(d_k)^2 : |d_k| > \varepsilon s_n] \leq \dfrac{1}{\varepsilon^2 s_n^2} E[(d_k)^4],
\]
and
\[
E[(d_k)^4] \leq \dfrac{1}{(a_k)^2} \cdot E[(d_k)^2] \sim \dfrac{p  \cdot \Gamma(1+p)^3}{k^{1+3p}}\quad \mbox{as $k  \to \infty$,}
\]
we have
\begin{align*}
\dfrac{1}{s_n^2} \sum_{k=n}^{\infty} E[(d_k)^2 : |d_k| > \varepsilon s_n] &\leq \dfrac{1}{\varepsilon^2 s_n^4} \sum_{k=n}^{\infty} E[(d_k)^4] \\
&\leq C_1 n^{2p} \cdot \dfrac{1}{n^{3p}} = \dfrac{C_1}{n^p} \to 0\quad \mbox{as $n \to \infty$}.
\end{align*}
In view of Remark \ref{rem:Heyde77Theorem1b}, condition b) is also satisfied.

Similarly, for $\varepsilon>0$ we have
\begin{align*}
\dfrac{1}{s_k}E[|d_k| : |d_k| > \varepsilon s_k] &\leq \dfrac{1}{s_k} \cdot \dfrac{1}{\varepsilon^3 s_k^3} E[(d_k)^4] \\
&\leq C_2 k^{2p} \cdot \dfrac{1}{k^{1+3p}} = \dfrac{C_2}{k^{1+p}},
\end{align*}
which implies that condition c) holds.

Condition d) is implied by
\[
\displaystyle \sum_{n=1}^{\infty} \dfrac{1}{s_n^4} E[(d_n)^4]<+\infty.
\]

To deduce the conclusions of Theorems \ref{thm:MiyazakiTakeiCLT} and \ref{thm:MiyazakiTakeiLIL} from Theorem \ref{thm:Heyde77Theorem1b} (ii) and (iii), note that
\begin{align*}
 W-M_n = \dfrac{a_n \cdot W - (H_n-E[H_n])}{a_n} = \dfrac{a_n \cdot \widehat{W} - H_n}{a_n},
\intertext{and}
 a_n \cdot \widehat{\phi}\left(\dfrac{\widehat{W}}{a_n} \right) \sim  \widehat{\phi}\left( a_n \cdot \widehat{W} \right)\quad \mbox{as $n \to \infty$},
\end{align*}
where $\widehat{\phi}(t)=\sqrt{2t\log |\log t|}$.
\end{proof}

\appendix

\section{The minimal random walk model and percolation on random recursive trees} \label{sec:minRWandPerc}

We explore a relation between the minimal random walk model by Harbola, Kumar, and Lindenberg \cite{HarbolaKumarLindenberg14PRE}, explained in the Introduction, and percolation on random recursive trees. Throughout this section we assume that $0 \leq q < p < 1$, and set $\alpha=p-q$ and $\rho = q/(1-\alpha)$. 

The sequence $\{T_i\}$ of random recursive trees is defined in section \ref{sec:ApplPercRRT}. 
Consider bond percolation on $T_n$ with parameter $\alpha$.
The expectation regarding this model is denoted by $E_{\alpha} [ \,\cdot\,]$.
There are at most $n$ clusters, which are denoted by $\mathcal{C}_{1,n},\,\mathcal{C}_{2,n},\,\cdots,\,\mathcal{C}_{n,n}$ (for convenience we regard $\mathcal{C}_{j,n}=\emptyset$ if $j$ is larger than the number of clusters).
We quote some of results in K\"{u}rsten \cite{Kursten16}.

\begin{lemma} \label{lem:Kursten16summary} For bond percolation on $T_n$ with parameter $\alpha \in (0,1)$,
\begin{align}
E_{\alpha} [\# \mathcal{C}_{1,n} ] &= \dfrac{\Gamma(n+\alpha)}{\Gamma(1+\alpha)\Gamma(n)}, \label{eq:Kuersten16PRE(13)} \\
E_{\alpha} [(\# \mathcal{C}_{1,n})^2 ] &= \dfrac{2\Gamma(n+2\alpha)}{\Gamma(1+2\alpha)\Gamma(n)} - \dfrac{\Gamma(n+\alpha)}{\Gamma(1+\alpha)\Gamma(n)}, \label{eq:Kuersten16PRE(44)} \\
\sum_{j=1}^n E_{\alpha} [(\# \mathcal{C}_{j,n})^2 ] &= \begin{cases}
\dfrac{1}{1-2\alpha} \cdot n+ \dfrac{1}{2\alpha-1} \cdot \dfrac{\Gamma(n+2\alpha)}{\Gamma(2\alpha)\Gamma(n)}  &(\alpha \neq 1/2), \\[3mm]
\displaystyle n\sum_{\ell=1}^n \dfrac{1}{\ell}&(\alpha=1/2).
\end{cases} \label{eq:Kuersten16PRE(17)}
\end{align}
\end{lemma}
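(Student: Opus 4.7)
The plan is to treat the three formulas separately. Equations \eqref{eq:Kuersten16PRE(13)} and \eqref{eq:Kuersten16PRE(44)} reduce almost immediately to results already established for the laziest minimal model, while \eqref{eq:Kuersten16PRE(17)} calls for an independent recursion in $n$ that exploits the tree-growth rule directly.

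For \eqref{eq:Kuersten16PRE(13)} and \eqref{eq:Kuersten16PRE(44)}, I would invoke the distributional identity $\#\mathcal{C}_{1,n} \stackrel{d}{=} H_n$ recorded in section \ref{sec:ApplPercRRT}, valid for the laziest minimal model with parameter $p=\alpha$. Substituting $\alpha$ for $p$ in \eqref{eq:ColettideLimaGavaSec4.6} gives $E_{\alpha}[\#\mathcal{C}_{1,n}] = a_n^{(1)}$ and $E_{\alpha}[(\#\mathcal{C}_{1,n})^2] = 2 a_n^{(2)} - a_n^{(1)}$, which is exactly \eqref{eq:Kuersten16PRE(13)} and \eqref{eq:Kuersten16PRE(44)}.

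For \eqref{eq:Kuersten16PRE(17)}, set $S_n := \sum_{j=1}^n (\#\mathcal{C}_{j,n})^2$ and condition on the pair $(T_n,\text{percolation configuration})$. When the new vertex $n+1$ attaches to the uniformly chosen vertex $u_n$, with probability $\alpha$ the new edge survives and the cluster containing $u_n$, of size $K := \#\mathcal{C}_{J(u_n),n}$, grows by one; this changes $S$ by $(K+1)^2 - K^2 = 2K+1$. With probability $1-\alpha$ the new vertex forms a singleton cluster and $S$ increases by $1$. Averaging over $u_n$ uniform on $T_n$ gives $E[K \mid T_n] = n^{-1} \sum_j (\#\mathcal{C}_{j,n})^2 = S_n/n$, so that
\[ E_{\alpha}[S_{n+1}] = \Bigl(1+\frac{2\alpha}{n}\Bigr)\, E_{\alpha}[S_n] + 1, \qquad E_{\alpha}[S_1]=1. \]
The fundamental homogeneous solution is $b_n := \Gamma(n+2\alpha)/[\Gamma(n)\Gamma(1+2\alpha)]$, satisfying $b_{n+1}/b_n = 1+2\alpha/n$. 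For $\alpha \neq 1/2$, the ansatz $n/(1-2\alpha)$ yields a particular solution; matching $S_1=1$ and using $\Gamma(1+2\alpha) = 2\alpha\,\Gamma(2\alpha)$ produces the first branch of \eqref{eq:Kuersten16PRE(17)}. For $\alpha = 1/2$ the linear ansatz resonates with $b_n=n$, so instead divide the recursion by $n+1$ to obtain
\[ \frac{E_{\alpha}[S_{n+1}]}{n+1} = \frac{E_{\alpha}[S_n]}{n} + \frac{1}{n+1}, \]
and telescope to get $E_{\alpha}[S_n]/n = \sum_{\ell=1}^{n} 1/\ell$.

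The only genuinely delicate step is the identity $E[\#\mathcal{C}_{J(u_n),n}\mid T_n] = S_n/n$, which is what makes the recursion for $S_n$ self-contained: the size-biased cluster picked by $u_n$ has expected size $S_n/n$, so $S_n$ alone determines $E[S_{n+1}-S_n\mid\mathcal{F}_n]$ without reference to finer cluster statistics. Everything else is routine linear recursion solving, with the $\alpha=1/2$ resonance handled by the telescoping trick above.
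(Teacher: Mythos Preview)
Your argument is correct. For \eqref{eq:Kuersten16PRE(13)} and \eqref{eq:Kuersten16PRE(44)} you do exactly what the paper's Remark after the lemma indicates: identify $\#\mathcal{C}_{1,n}$ with $H_n$ for the laziest model at parameter $p=\alpha$ and read off the first two lines of \eqref{eq:ColettideLimaGavaSec4.6}.

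For \eqref{eq:Kuersten16PRE(17)} the paper does not supply its own proof; it quotes the result from K\"{u}rsten \cite{Kursten16}, noting that there it is deduced from known formulas for the original (Sch\"{u}tz--Trimper) elephant random walk via the spin representation $S_n=\sum_j m_j\,\#\mathcal{C}_{j,n}$. Your route is different and more self-contained: you work directly with $S_n=\sum_j(\#\mathcal{C}_{j,n})^2$ and obtain the closed recursion $E_\alpha[S_{n+1}]=(1+2\alpha/n)E_\alpha[S_n]+1$ from the size-biased identity $E[K\mid \mathcal{F}_n]=S_n/n$. This bypasses any reference to the elephant walk and makes the appearance of the exponent $2\alpha$ transparent (one factor of $\alpha$ from the edge being open, the coefficient $2$ from $(K+1)^2-K^2=2K+1$). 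The cost is that you re-derive a linear recursion already implicit in the elephant-walk variance formulas; the gain is that the argument stands on its own and the $\alpha=1/2$ resonance is handled cleanly by your telescoping step. I checked the constants: with $b_1=1$ the matching gives $A=2\alpha/(2\alpha-1)$, and $\Gamma(1+2\alpha)=2\alpha\,\Gamma(2\alpha)$ then yields precisely the first branch of \eqref{eq:Kuersten16PRE(17)}.
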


\begin{remark} In \cite{Kursten16}, \eqref{eq:Kuersten16PRE(13)} and \eqref{eq:Kuersten16PRE(17)} are derived from basic results on the original elephant random walk, found in \cite{SchutzTrimper04}. In view of the connection with the laziest minimal random walk model explained in section \ref{sec:ApplPercRRT}, \eqref{eq:Kuersten16PRE(13)} and \eqref{eq:Kuersten16PRE(44)} are paraphrases of \eqref{eq:ColettideLimaGavaSec4.6}. Those are obtained by solving relatively easy difference equations.
\end{remark}

Let $\xi_1,\xi_2,\cdots,\xi_n$ be a sequence of independent random variables, which is also independent from bond percolation, satisfying
\begin{align*}
&P_{p,q,s} (\xi_1=1)=1-P_{p,q,s}(\xi_1=0)=s,\quad \mbox{and} \\
&P_{p,q,s}(\xi_j=1)=1-P_{p,q,s}(\xi_j=0)=\rho \quad \mbox{for $j>1$.}
\end{align*}
The transition probability 
in \eqref{eq:HarbolaKumarLindenberg14PREtrans} can be interpreted as follows: For each step $n=2,3,\cdots$,
\begin{itemize}
\item with probability $\alpha$, the walker repeats the behavior at a uniformly chosen time, and
\item with probability $1-\alpha$, the walker moves forward with probability $\rho$, or remains at rest otherwise.
\end{itemize}
Similarly to \cite{Kursten16}, we can see that $H_n$ has the same distribution as
\begin{align}
\sum_{j=1}^n \xi_j \cdot (\# \mathcal{C}_{j,n}). \label{eq:Kuersten16(11)bis}
\end{align}

In the case $q>0$, computation of the second moment (and the variance) of $H_n$ by solving difference equations is straightforward but quite tedious, as is imagined from very complicated equations (8), (9) and (10) in \cite{HarbolaKumarLindenberg14PRE}. Using the above connection with percolation, we can easily obtain concise formulae described in terms of the moments of the size of open clusters. 

\begin{theorem} \label{thm:MiyazakiTakei20AppB} Let $E_{p,q,s}$ and $V_{p,q,s}$ denote the expectation and the variance for the minimal random walk model. Assume that $0 \leq q < p < 1$, and set $\alpha=p-q$ and $\rho = q/(1-\alpha)$. Then we have the following.
\begin{align}
E_{p,q,s} [H_n] &= \rho n + (s-\rho) E_{\alpha} [\# \mathcal{C}_{1,n} ], \label{eq:MiyazakiTakei20AppB-E} \\
V_{p,q,s} [H_n] 
&= \rho (1-\rho) \sum_{j=1}^n E_{\alpha}[(\# \mathcal{C}_{j,n})^2] \notag \\
&\quad  + (1-2\rho)(s-\rho) E_{\alpha} [(\# \mathcal{C}_{1,n})^2 ] - (s-\rho)^2 (E_{\alpha} [\# \mathcal{C}_{1,n}])^2.  \label{eq:MiyazakiTakei20AppB-V}
\end{align}
\end{theorem}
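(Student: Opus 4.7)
The whole argument will be based on the distributional identity
\[
H_n \stackrel{d}{=} \sum_{j=1}^n \xi_j \cdot \#\mathcal{C}_{j,n}
\]
from \eqref{eq:Kuersten16(11)bis}, together with the fact that the random variables $\xi_1,\ldots,\xi_n$ are mutually independent and independent of the percolation configuration, with $E[\xi_1]=s$, $\mathrm{Var}(\xi_1)=s(1-s)$, and $E[\xi_j]=\rho$, $\mathrm{Var}(\xi_j)=\rho(1-\rho)$ for $j\geq 2$. The key bookkeeping fact I will use repeatedly is that every vertex of $T_n$ belongs to exactly one cluster, so $\sum_{j=1}^n \#\mathcal{C}_{j,n}=n$ deterministically; in particular $\sum_{j=2}^n E_\alpha[\#\mathcal{C}_{j,n}]=n-E_\alpha[\#\mathcal{C}_{1,n}]$.

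For \eqref{eq:MiyazakiTakei20AppB-E}, I simply take expectation and use independence: the right-hand side equals $s\,E_\alpha[\#\mathcal{C}_{1,n}]+\rho\sum_{j=2}^n E_\alpha[\#\mathcal{C}_{j,n}]$, and the identity $\sum_j \#\mathcal{C}_{j,n}=n$ collapses this to $\rho n+(s-\rho)\,E_\alpha[\#\mathcal{C}_{1,n}]$.

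For \eqref{eq:MiyazakiTakei20AppB-V} the cleanest route is the law of total variance, conditioning on the percolation configuration. Given the cluster sizes $C_j:=\#\mathcal{C}_{j,n}$, the variables $\xi_j C_j$ are independent, so
\[
\mathrm{Var}\bigl(H_n\mid \text{perc.}\bigr)=s(1-s)\,C_1^2+\rho(1-\rho)\sum_{j=2}^n C_j^2,
\qquad
E[H_n\mid\text{perc.}]=(s-\rho)C_1+\rho n.
\]
Taking expectation of the first and variance of the second with respect to the percolation law, and combining, I get
\[
V_{p,q,s}[H_n]=\rho(1-\rho)\sum_{j=1}^n E_\alpha[C_j^2]
+\bigl\{s(1-s)-\rho(1-\rho)+(s-\rho)^2\bigr\}E_\alpha[C_1^2]
-(s-\rho)^2\bigl(E_\alpha[C_1]\bigr)^2.
\]
The only nontrivial step is the algebraic identity
\[
s(1-s)-\rho(1-\rho)+(s-\rho)^2=(s-\rho)(1-2\rho),
\]
which factors out $(s-\rho)$ after expansion and delivers exactly \eqref{eq:MiyazakiTakei20AppB-V}.

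There is no real obstacle here: the proof rests entirely on the representation \eqref{eq:Kuersten16(11)bis}, the conservation law $\sum_j C_j=n$, and a short algebraic simplification. The mild subtlety is the choice to condition on the percolation configuration rather than on the $\xi_j$'s; this is what makes the variance calculation clean, since conditionally on the percolation, $H_n$ becomes a sum of independent (though not identically distributed, because of $\xi_1$) Bernoulli multiples of fixed quantities.
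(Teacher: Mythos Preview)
Your argument is correct. The expectation part coincides with the paper's proof verbatim. For the variance, the paper proceeds more directly: it expands $E_{p,q,s}[(H_n)^2]$ as a double sum, splits off the $j=1$ terms, rewrites the cross sums $\sum_{1\le j<k\le n}E_\alpha[C_jC_k]$ and $\sum_{k>1}E_\alpha[C_1C_k]$ using $\sum_j C_j=n$ and $\sum_{k>1}C_1C_k=C_1(n-C_1)$, and then subtracts $(E_{p,q,s}[H_n])^2$. Your law-of-total-variance route is a genuinely different organization of the same ingredients: by conditioning on the percolation configuration you bypass the cross terms entirely (they never appear, since conditionally the $\xi_jC_j$ are independent), and the whole calculation reduces to the single algebraic identity $s(1-s)-\rho(1-\rho)+(s-\rho)^2=(s-\rho)(1-2\rho)$. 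This is arguably cleaner and makes the probabilistic structure more transparent; the paper's approach, on the other hand, stays closer to the parallel computation in K\"ursten's original paper for the elephant walk.
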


\begin{proof} Since $E_{p,q,s} [\xi_1]=s$ and $E_{p,q,s} [\xi_j]=\rho$ for $j>1$, we have
\begin{align*}
E_{p,q,s} [H_n] &= \sum_{j=1}^n E_{p,q,s} [\xi_j] \cdot E_{\alpha} [\# \mathcal{C}_{j,n}] 
= s E_{\alpha}[\# \mathcal{C}_{1,n}]+\rho \sum_{j=2}^n E_{\alpha}[\# \mathcal{C}_{j,n}] \\
&= \rho \sum_{j=1}^n E_{\alpha}[\# \mathcal{C}_{j,n}] + (s-\rho) E_{\alpha} [\# \mathcal{C}_{1,n}]
= \rho n + (s-\rho) E_{\alpha}[\# \mathcal{C}_{1,n}].
\end{align*}
Turning to the mean square displacement, similarly as eq. (17) in \cite{Kursten16},
\begin{align} 
&E_{p,q,s}[(H_n)^2]\notag \\
&= \sum_{j=1}^n E_{p,q,s} [(\xi_j)^2] \cdot E_{\alpha} [(\# \mathcal{C}_{j,n})^2] \notag\\
&\quad + 2\sum_{1\leq j<k \leq n} E_{p,q,s}[\xi_j] \cdot E_{p,q,s}[\xi_k] \cdot E_{\alpha} [(\# \mathcal{C}_{j,n}) \cdot (\# \mathcal{C}_{k,n})] \notag \\
&= \rho \sum_{j=1}^n E_{\alpha}[(\# \mathcal{C}_{j,n})^2] + 2\rho^2 \sum_{1 \leq j<k \leq n} E_{\alpha}[(\# \mathcal{C}_{j,n}) \cdot (\# \mathcal{C}_{k,n})] \notag \\
&\quad +(s-\rho) E_{\alpha}[(\# \mathcal{C}_{1,n})^2]+ 2(s - \rho)\rho \sum_{1<k \leq n} E_{\alpha}[(\# \mathcal{C}_{1,n}) \cdot (\# \mathcal{C}_{k,n})]. \label{eq:MiyazakiTakei20AppB-Vtemp}
\end{align}
The first two terms in \eqref{eq:MiyazakiTakei20AppB-Vtemp} are 
\begin{align*}
&\rho^2 E_{\alpha}\left[ \left(\sum_{j=1}^n \# \mathcal{C}_{j,n}\right)^2 \right]+(\rho-\rho^2) \sum_{j=1}^n E_{\alpha}[(\# \mathcal{C}_{j,n})^2]  \\
&=\rho^2n^2 + \rho(1-\rho) \sum_{j=1}^n E_{\alpha}[(\# \mathcal{C}_{j,n})^2].
\end{align*}
Noting that
\begin{align*}
\sum_{1<k \leq n} E_{\alpha}[(\# \mathcal{C}_{1,n}) \cdot (\# \mathcal{C}_{k,n})] 
&= E_{\alpha}\left[(\# \mathcal{C}_{1,n})\cdot \sum_{1<k \leq n} (\# \mathcal{C}_{k,n}) \right] \\
&= E_{\alpha}[(\# \mathcal{C}_{1,n})\cdot (n-\# \mathcal{C}_{1,n} )],
\end{align*}
the other two terms in \eqref{eq:MiyazakiTakei20AppB-Vtemp} are 
\begin{align*}
&(s-\rho) E_{\alpha}[(\# \mathcal{C}_{1,n})^2]+ 2(s-\rho)\rho E_{\alpha}[(\# \mathcal{C}_{1,n})\cdot (n-\# \mathcal{C}_{1,n} )] \\
&=(1-2\rho)(s-\rho) E_{\alpha}[(\# \mathcal{C}_{1,n})^2]+ 2(s-\rho)\rho n E_{\alpha}[\# \mathcal{C}_{1,n}] .
\end{align*}
Using 
\begin{align*}
(E_{p,q,s}[H_n])^2 &= \rho^2 n^2 +2(s-\rho) \rho n E_{\alpha} [\# \mathcal{C}_{1,n} ]+ (s-\rho)^2 (E_{\alpha} [\# \mathcal{C}_{1,n} ])^2,
\end{align*}
we have the conclusion.
\end{proof}

Combining \eqref{eq:MiyazakiTakei20AppB-V} with Lemma \ref{lem:Kursten16summary}, we can obtain the asymptotics of the variance. 

\begin{corollary} \label{cor:MiyazakiTakei20AppB} When $q>0$,
\begin{align*}
V_{p,q,s} [H_n] 
&\sim \begin{cases}
\dfrac{\rho (1-\rho)}{1-2\alpha}n &(\alpha<1/2), \\
\rho(1-\rho) n \log n &(\alpha=1/2), \\
\left[ \dfrac{\rho(1-\rho)}{(2\alpha-1)\Gamma(2\alpha)} + \dfrac{(1-2\rho)(s-\rho)}{\Gamma(1+2\alpha)} - \dfrac{(s-\rho)^2}{\Gamma(1+\alpha)^2}\right] n^{2\alpha} &(\alpha>1/2) \\ 
\end{cases}
\end{align*}
as $n \to \infty$.
\end{corollary}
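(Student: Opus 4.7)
The plan is essentially bookkeeping. My approach is to substitute the exact formulas from Lemma \ref{lem:Kursten16summary} into the variance decomposition \eqref{eq:MiyazakiTakei20AppB-V}, apply the standard Stirling-type asymptotic $\Gamma(n+\beta)/\Gamma(n) \sim n^\beta$ as $n \to \infty$ to each resulting Gamma quotient, and then identify which summand dominates in each of the three parameter regions.

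First I would observe that the last two summands of \eqref{eq:MiyazakiTakei20AppB-V} are always of exact order $n^{2\alpha}$: from \eqref{eq:Kuersten16PRE(13)} and \eqref{eq:Kuersten16PRE(44)} one has
\[
E_\alpha[\#\mathcal{C}_{1,n}] \sim \frac{n^\alpha}{\Gamma(1+\alpha)}, \qquad E_\alpha[(\#\mathcal{C}_{1,n})^2] \sim \frac{2\, n^{2\alpha}}{\Gamma(1+2\alpha)},
\]
so they supply contributions proportional to $(1-2\rho)(s-\rho)\, n^{2\alpha}$ and $-(s-\rho)^2\, n^{2\alpha}$ respectively, independently of the regime. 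The trichotomy is therefore driven entirely by the first summand $\rho(1-\rho)\sum_{j=1}^n E_\alpha[(\#\mathcal{C}_{j,n})^2]$, for which \eqref{eq:Kuersten16PRE(17)} gives different leading behavior: order $n/(1-2\alpha)$ when $\alpha < 1/2$, order $n\log n$ when $\alpha = 1/2$, and order $n^{2\alpha}/[(2\alpha-1)\Gamma(2\alpha)]$ when $\alpha > 1/2$.

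Comparing orders with the other two terms then yields the trichotomy: for $\alpha < 1/2$ the first summand wins outright (order $n \gg n^{2\alpha}$), giving the stated coefficient $\rho(1-\rho)/(1-2\alpha)$; for $\alpha = 1/2$ the first summand again wins (order $n\log n \gg n$), producing $\rho(1-\rho)\, n\log n$; for $\alpha > 1/2$, all three summands are of the same order $n^{2\alpha}$ and one simply collects the three coefficients to obtain the bracketed expression in the third line. The main obstacle is purely algebraic bookkeeping in the third case; there is no analytic difficulty once Lemma \ref{lem:Kursten16summary} and the Stirling ratio asymptotic are in hand.
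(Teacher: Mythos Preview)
Your proposal is correct and is precisely the argument the paper intends: the paper's own justification is simply the one-line remark that the asymptotics follow by combining \eqref{eq:MiyazakiTakei20AppB-V} with Lemma~\ref{lem:Kursten16summary}, and you have spelled out exactly that substitution-and-comparison, correctly isolating $\rho(1-\rho)\sum_j E_\alpha[(\#\mathcal{C}_{j,n})^2]$ as the term that drives the trichotomy while the remaining two summands are $O(n^{2\alpha})$ in every regime.
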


To close this section, we give a remark on phase transition of the {\it biased elephant random walk} $\{S_n\}$ on $\mathbb{Z}$:  
\begin{itemize}
\item With probability $\alpha$, the walker repeats one of previous steps.
\item With probability $1-\alpha$, the walker performs like a simple random walk, which jumps to the right with probability $\rho$, or to the left with probability $1-\rho$ (The unbiased case $\rho=1/2$ is the original elephant random walk explained in the Introduction, where $p \geq 1/2$ and $\alpha=2p-1$.)
\end{itemize}
This is obtained from the minimal random walk model as follows: Let 
\[ Y_i :=2X_i-1 \quad \mbox{and} \quad S_n=\sum_{i=1}^n Y_i = 2H_n-n. \]
Then $P(Y_1=+1)=1- P(Y_1=-1)=s$,
and by \eqref{eq:HarbolaKumarLindenberg14PREtrans},
\[
P(Y_{n+1}=\pm 1 \mid \mathcal{F}_n) = \alpha \cdot \dfrac{\#\{i=1,\cdots,n : Y_i=\pm 1\}}{n} + (1-\alpha) \cdot \rho.
\]
By \eqref{eq:Colettietal19sLLN}, we have
\begin{align}
 \lim_{n \to \infty} \dfrac{S_n}{n}= 2\rho-1\qquad \mbox{a.s..} \label{eq:Colettietal19sLLNBiasedRW}
\end{align}

Consider bond percolation on $T_n$ with parameter $\alpha$, and assign `spin' $m_j:=2\xi_j-1\in \{+1,-1\}$ to each of percolation clusters $\mathcal{C}_{j,n}$, independently for different clusters. By \eqref{eq:Kuersten16(11)bis},
$S_n$ has the same distribution as
\begin{align*}
\sum_{j=1}^n m_j \cdot (\# \mathcal{C}_{j,n}).
\end{align*}
The above procedure is essentially the same as the {\it ``Divide and Color" model} introduced by H\"{a}ggstr\"{o}m \cite{Haggstrom01SPA}.
When $s=\rho=1/2$, the resulting model resembles the Ising model with zero external field, and increasing $\alpha$ corresponds to lowering the temperature. The parameter $\varepsilon:=2\rho-1$ plays a similar role to the external field in the Ising model. By \eqref{eq:Colettietal19sLLNBiasedRW}, when $\varepsilon \neq 0$, the asymptotic speed of the walker remains unchanged regardless of the value of $\alpha$. On the other hand, when $\varepsilon = 0$, the walker admits a phase transition from diffusive to superdiffusive behavior. This is reminiscent of the fact that the Ising model admits a phase transition only when the external field is zero.

\section{Martingale limit theorems} \label{sec:MartLimitThm}

\begin{theorem}[Hall and Heyde \cite{HallHeyde80}, Theorem 2.15] \label{thm:HallHeyde80Thm215} Suppose that $\{M_n\}$ is a square-integrable martingale with mean $0$.
Let $d_k=M_k-M_{k-1}$ for $k=1,2,\cdots$, where $M_0=0$. On the event
\[ \left\{ \sum_{k=1}^{\infty}  E[(d_k)^2 \mid \mathcal{F}_{k-1}] <+\infty \right\},  \]
$\{M_n\}$ converges a.s..
\end{theorem}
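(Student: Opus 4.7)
The plan is to prove this via a truncation-by-stopping argument that reduces to Doob's $L^2$ convergence theorem. The key observation is that the hypothesis controls the predictable quadratic variation, not the quadratic variation itself, so we must work with the process $\langle M\rangle_n := \sum_{k=1}^n E[(d_k)^2 \mid \mathcal{F}_{k-1}]$. A direct expansion, using $E[d_{k}M_{k-1}\mid\mathcal{F}_{k-1}]=0$, shows $M_n^2 - \langle M\rangle_n$ is a martingale, so in particular $E[M_n^2] = E[\langle M\rangle_n]$ for each $n$ (when the latter is finite), and more generally the same identity holds for bounded stopping times by optional stopping.

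Next, for each $c>0$ I would introduce
\[
T_c := \inf\{n \geq 0 : \langle M\rangle_{n+1} > c\},
\]
with the convention $\inf\emptyset = \infty$. Because $\langle M\rangle_{n+1}$ is $\mathcal{F}_n$-measurable, $T_c$ is a (predictable) stopping time, and by construction the stopped predictable quadratic variation satisfies $\langle M\rangle_{(n\wedge T_c)+1} \leq c$, hence $\langle M\rangle_{n\wedge T_c} \leq c$ for every $n$. Applying the identity above to the bounded stopping time $n\wedge T_c$ yields
\[
E[M_{n\wedge T_c}^2] = E[\langle M\rangle_{n\wedge T_c}] \leq c,
\]
so $\{M_{n\wedge T_c}\}_{n\geq 0}$ is an $L^2$-bounded martingale. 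By Doob's martingale convergence theorem it converges a.s.\ (and in $L^2$) to some limit $M_\infty^{(c)}$.

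Finally, I would patch these stopped limits together. Set $A := \{\sum_{k=1}^\infty E[(d_k)^2 \mid \mathcal{F}_{k-1}] < \infty\}$, and note
\[
A = \bigcup_{c \in \mathbb{N}} \{T_c = \infty\} \quad \text{(up to a $P$-null set),}
\]
since on $A$ the monotone sum is a finite random variable and is therefore dominated by some integer $c$. On the event $\{T_c = \infty\}$ one has $M_n = M_{n\wedge T_c}$ for every $n$, so $M_n$ converges a.s.\ on $\{T_c=\infty\}$; taking the countable union over $c$ gives a.s.\ convergence of $M_n$ on $A$.

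The main obstacle, as far as there is one, is the correct handling of the stopping time: defining $T_c$ in terms of $\langle M\rangle_{n+1}$ (the predictable version at time $n+1$) rather than $\langle M\rangle_n$ is what guarantees both that $T_c$ is a stopping time and that the stopped predictable quadratic variation is deterministically bounded by $c$, which is precisely what allows Doob's theorem to be applied to each stopped process. Apart from that, everything is routine, and no hypothesis beyond what is stated in the theorem is needed.
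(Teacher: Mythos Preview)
The paper does not actually prove this statement: Theorem~\ref{thm:HallHeyde80Thm215} is quoted in the appendix as a reference result from Hall and Heyde's book, and is used as a black box in Section~\ref{sec:ProofLimitThm}. So there is no in-paper proof to compare against.

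That said, your argument is correct and is essentially the classical one (and indeed the one given in Hall--Heyde). The only delicate point --- defining $T_c$ via $\langle M\rangle_{n+1}$ so that the stopping time is adapted \emph{and} the stopped predictable quadratic variation is deterministically bounded --- you handle properly. One small remark: when you invoke optional stopping to get $E[M_{n\wedge T_c}^2]=E[\langle M\rangle_{n\wedge T_c}]$, you are implicitly using that $N_n:=M_n^2-\langle M\rangle_n$ is an integrable martingale; this follows because each $d_k\in L^2$ (as a difference of $L^2$ variables) so $\langle M\rangle_n\in L^1$ for each fixed $n$, and the stopping time $n\wedge T_c$ is bounded. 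It may be worth stating this explicitly, but there is no gap.
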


\begin{theorem}[Heyde \cite{Heyde77}, Theorem 1 (b)] \label{thm:Heyde77Theorem1b} Suppose that $\{M_n\}$ is a square-integrable martingale with mean $0$. Let $d_k=M_k-M_{k-1}$ for $k=1,2,\cdots$, where $M_0=0$. If  
\[ \displaystyle \sum_{k=1}^{\infty} E[(d_k)^2] < +\infty \]
holds in addition, then we have the following: Let 
\[
\displaystyle W_n^2 :=\sum_{k=n}^{\infty} (d_k)^2 
\quad \mbox{and} \quad 
s_n^2 := \sum_{k=n}^{\infty} E[ (d_k)^2].
\]
\begin{itemize}
\item[(i)] The limit $M_{\infty}:=\sum_{k=1}^{\infty} d_k$ exists a.s., and $M_n \stackrel{L^2}{\to} M_{\infty}$.
\item[(ii)] Assume that
\begin{itemize}  
\item[a)] $\displaystyle \dfrac{W_n^2}{s_n^2}  \to \eta^2$ as $n \to \infty$ in probability, and
\item[b)] $\displaystyle \lim_{n \to \infty} \dfrac{1}{s_n^2}E\left[  \sup_{k \geq n} (d_k)^2\right]=0$,
\end{itemize}
where $\eta^2$ is some a.s. finite and non-zero random variable.
Then we have
\begin{align*}
\dfrac{M_{\infty} - M_n}{W_{n+1}} &= \dfrac{ \sum_{k=n+1}^{\infty} d_k}{W_{n+1}}\stackrel{d}{\to} Z,\quad {and} \\
\dfrac{M_{\infty} - M_n}{s_{n+1}} &= \dfrac{ \sum_{k=n+1}^{\infty} d_k}{s_{n+1}}\stackrel{d}{\to} \widehat{\eta} \cdot Z, 
\end{align*} 
where $Z$ is distributed as $N(0,1)$, and $\widehat{\eta}$ is independent of $Z$ and distributed as $\eta$.
\item[(iii)] Assume that the following three conditions hold:
\begin{itemize}
\item[a')] $\displaystyle \dfrac{W_n^2}{s_n^2} \to \eta^2$ as $n \to \infty$ a.s.,
\item[c)] $\displaystyle \sum_{k=1}^{\infty} \dfrac{1}{s_k} E[ |d_k| : |d_k| > \varepsilon s_k] < +\infty$ for any $\varepsilon > 0$, and
\item[d)] $\displaystyle \sum_{k=1}^{\infty} \dfrac{1}{s_k^4} E[ (d_k)^4 : |d_k| \leq \delta s_k] < +\infty$ for some $\delta > 0$.
\end{itemize}
Then $\displaystyle \limsup_{n \to \infty} \pm \dfrac{M_{\infty} - M_n}{\widehat{\phi}(W_{n+1}^2)} =1$ a.s., where $\widehat{\phi}(t):=\sqrt{2t\log |\log t|}$.
\end{itemize}
\end{theorem}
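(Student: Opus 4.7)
The plan is to handle parts (i), (ii), and (iii) sequentially, with the overarching tool being Skorohod embedding into Brownian motion. Part (i) is immediate: under $\sum_{k=1}^\infty E[(d_k)^2] < +\infty$ the martingale $\{M_n\}$ is $L^2$-bounded, so Doob's convergence theorem yields both almost-sure and $L^2$ convergence of $M_n$ to $M_\infty := \sum_{k=1}^\infty d_k$.

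For part (ii), I would construct, on a suitably enlarged filtration, a standard Brownian motion $B$ and an increasing sequence $\{\tau_k\}$ of stopping times with $\tau_0=0$, $M_k = B(\tau_k)$, and $E[\tau_k - \tau_{k-1} \mid \mathcal{F}_{k-1}] = E[(d_k)^2 \mid \mathcal{F}_{k-1}]$. Then $M_\infty - M_n = B(\tau_\infty) - B(\tau_n)$; the embedded clock increment is comparable to the quadratic variation $W_{n+1}^2$, and its expectation equals $s_{n+1}^2$. Condition a) delivers $W_{n+1}^2/s_{n+1}^2 \to \eta^2$ in probability, and condition b) is a uniform-smallness bound on the squared increments that lets the embedded clock be well approximated by Brownian scaling. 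The first claim of (ii) follows from $(B(\tau_\infty)-B(\tau_n))/\sqrt{\tau_\infty-\tau_n} \stackrel{d}{\to} Z \sim N(0,1)$, asymptotically independent of the clock; the second follows by writing the normalization as $\sqrt{W_{n+1}^2/s_{n+1}^2} \cdot \{(B(\tau_\infty)-B(\tau_n))/\sqrt{\tau_\infty-\tau_n}\}$, whose two factors become asymptotically independent with the prefactor tending in distribution to $\widehat{\eta}$.

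Part (iii), the law of the iterated logarithm, is where the real work lies. Using the same embedding, $M_\infty - M_n = B(\tau_\infty)-B(\tau_n)$ with $\tau_\infty - \tau_n \downarrow 0$, so the LIL for Brownian motion at the finite time $\tau_\infty$ yields the assertion with $\tau_\infty-\tau_n$ in place of $W_{n+1}^2$. Replacing $\tau_\infty-\tau_n$ by $W_{n+1}^2$ requires $W_{n+1}^2/(\tau_\infty-\tau_n)\to 1$ almost surely, and this is where the stronger hypotheses enter: condition d) supplies the fourth-moment bound needed to apply Theorem \ref{thm:HallHeyde80Thm215} to the auxiliary martingale $\sum_k \{(d_k)^2 - E[(d_k)^2 \mid \mathcal{F}_{k-1}]\}/s_k^2$, upgrading a) to the almost-sure statement a'), while condition c) controls rare large increments via a Borel--Cantelli argument so that the embedded clock is not dominated by a few long excursions.

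The main obstacle I anticipate is the bookkeeping in (iii): controlling simultaneously the three quantities $\tau_\infty-\tau_n$, $W_{n+1}^2$, and $s_{n+1}^2$, and showing that their mismatches are of strictly lower order than $\widehat\phi(W_{n+1}^2)$. An alternative route that bypasses Skorohod embedding would be to apply a Freedman-type exponential martingale inequality along a geometric subsequence in $s_n$, together with Borel--Cantelli; this avoids the time change but ultimately demands the same moment hypotheses c) and d).
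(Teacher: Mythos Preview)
The paper does not prove this theorem: it appears in Appendix~B purely as a quoted result from Heyde~(1977), with no argument supplied, so there is no in-paper proof to compare your proposal against.

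As to your outline itself, the Skorohod-embedding strategy is a reasonable heuristic, but several steps you treat as routine are not. In~(ii), the assertion that $(B(\tau_\infty)-B(\tau_n))/\sqrt{\tau_\infty-\tau_n}\stackrel{d}{\to}N(0,1)$ does not follow from Brownian scaling, because $\tau_\infty-\tau_n$ is a random quantity depending on the same Brownian path; the asymptotic independence of the two factors you invoke has to be \emph{proved}, and this is essentially the content of a stable martingale CLT. Heyde's own argument does not go through embedding at all but uses a martingale-array central limit theorem with random norming. In~(iii), the Brownian LIL is stated at deterministic times; applying it at the random endpoint $\tau_\infty$ and then replacing $\tau_\infty-\tau_n$ by $W_{n+1}^2$ requires exactly the almost-sure comparison you flag as ``the main obstacle'' but do not actually carry out---and conditions~c) and~d) enter in a more delicate way than a single Borel--Cantelli step. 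If you wish to write a full proof, the most direct route is to follow Heyde~(1977) rather than to re-derive it via embedding.
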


\begin{remark} \label{rem:Heyde77Theorem1b} 
A sufficient condition for b) in Theorem \ref{thm:Heyde77Theorem1b} is that 
\[ \displaystyle \lim_{n \to \infty}\dfrac{1}{s_n^2}  \sum_{k=n}^{\infty} E[ (d_k)^2 : |d_k| > \varepsilon s_n] =0 \]
for any $\varepsilon > 0$. (See the proof of Corollary 1 in Heyde \cite{Heyde77}.)
\end{remark}

\section{The Mittag--Leffler disribution} \label{app:MLdistr}

The {\it Mittag--Leffler function} is defined by
\[ E_{\alpha}(z) := \sum_{k=0}^{\infty} \dfrac{z^k}{\Gamma(k + \alpha)}\qquad (\alpha,z \in \mathbb{C}). \]
Note that $E_1(z)=e^z$ (See e.g. \cite{BinghamGoldieTeugels89}, p. 315).

The random variable $X$ is {\it Mittag--Leffler distributed} with parameter $p \in [0,1]$ if 
\[ E[e^{\lambda X}] = E_p (\lambda) =  \sum_{k=0}^{\infty} \dfrac{\lambda^k}{\Gamma(1+ kp)}\qquad \mbox{for $\lambda \in \mathbb{R}$}. \] 
Thus the $k$-th moment of $X$ is $ \dfrac{k!}{\Gamma(1+ kp)}$, and this distribution is determined by moments (see \cite{BinghamGoldieTeugels89}, p. 329 and p. 391).
If $p=0$ (resp. $p=1$), then $X$ has the exponential distribution with mean one (resp. $X$ concentrates on $\{1\}$). 
For $p \in (0,1)$, the probability density function $f_p(x)$ of $X$ is 
\[ f_p(x)= \dfrac{\rho_p(x^{-1/p})}{px^{1+1/p}}\qquad \mbox{for $x>0$},  \]
where $\rho_p(x)$ is the density function of the one-sided $\mbox{stable$(p)$}$ distribution. See \cite{Pollard48BAMS} for details. In particular, $f_{1/2}$ is the density function of the standard half-normal distribution:
\[ f_{1/2} (x) = \sqrt{\dfrac{2}{\pi}} e^{-x^2/2}\qquad \mbox{for $x>0$}.  \]

%




\end{document}